\newtheorem{thm}{Theorem}[section]
\newtheorem{con}{Conjecture}
\begin{document}
\title{\textbf{Total Colorings-A Survey }}
\author[1]{\textbf{J. Geetha}}
\author[2]{\textbf{Narayanan. N}}
\author[1]{\textbf{K. Somasundaram}}
\affil[1]{Department of Mathematics, Amrita School of Engineering-Coimbatore\\ Amrita Vishwa Vidyapeetham, India.}
\affil[2]{Department of Mathematics\\
Indian Institute of Technology Madras\\
Chennai, India.}
\date{}
\maketitle

\noindent \textbf{Abstract:} The smallest integer $k$ needed for the
assignment of colors to the elements so that the  coloring is proper
(vertices and edges) is called the
total chromatic number of a graph. Vizing~\cite{78} and Behzed~\cite{76,77}
conjectured that the total coloring can be done using at most
$\Delta(G)+2$ colors, where $\Delta(G)$ is the maximum degree of $G$.
 It is not
settled even for planar graphs. In this paper we give a survey on
total coloring of graphs.

\section{Introduction}
Let $G$ be  a simple graph with vertex set $V(G)$ and edge set $E(G)$. An element of $G$ is a vertex or an edge of $G$. The \textit{total coloring} of a graph $G$ is an assignment of colors to the vertices and edges such that no two
incident or adjacent elements receive the same color. The \textit{ total chromatic number} of $G$, denoted by
$\chi''(G)$, is the least number of colors required for a total coloring. Clearly, $\chi''(G)\geq \Delta(G)+1$, where
$\Delta(G)$ is the maximum degree of $G$.

Behzed~\cite{76,77} and Vizing~\cite{78} independently posed a
conjecture called Total Coloring Conjecture (TCC) which states that
for any simple graph $G$, the total chromatic number is either
$\Delta(G)+1$ or $\Delta(G)+2$.  In~\cite{86}, Molly and Reed gave a
probabilistic approach to prove that for sufficiently large
$\Delta(G)$, the total chromatic number is at most
$\Delta(G)+10^{26}$.

If $\chi''(G)=\Delta(G)+1$ then $G$ is known as a type-I  graph and if $\chi''(G)=\Delta(G)+2$  then $G$ is a type-II
graph. In this  paper, we present a comprehensive survey on total coloring. Yap ~\cite{79} gave a nice survey on
total colorings that covers the results till 1995. Therefore, our survey cover results from 1996 onwards. There are
four sections in this paper. In the second section, we focus on results on planar graphs. In ~\cite{80}, Borodin
gave a survey of results on total colorings of planar graphs up to 2009. There are several  improved  results on planar
graphs since then. Thus, we only present the results from 2010 onwards. For the earlier results we refer the readers to
the earlier two excellent surveys mentioned above. Third section of this paper consists of results on non-planar graphs.
In this paper, we also  prove TCC holds for unitary cayley graphs, mock threshold graphs and odd graphs. In the last section,
we survey the results on complexity aspects of total coloring.


\section{Planar graphs} In this section, we consider the results related to planar graphs (graphs that have a plane
embedding).  Many of the results in total coloring of planar graphs are based on the maximum degree and the girth
constraints.

One of the most yielding techniques on planar graphs is the Discharging Method.  While one can say that the discharging method have been used in graph theory  for more than 100 years, it came to prominence when it was used to prove the four color theorem by Appel and Hacken. Since then, the method has  been applied to many types
of problems (including graph embeddings and decompositions, spread of infections in networks, geometric problems,
etc.). It is especially useful for dealing with planar graphs. An excellent guide to  the method of  discharging is given by Cranston and West ~\cite{98}.

A rough sketch of  using the discharging method  is as follows~\cite{99}:

\textbf{Charging phase:}

1. Assign initial charges to certain elements of a graph (vertices, edges, faces, etc.,).

2. Compute the total charge assigned to the whole graph (for
planar graphs typically using Euler's formula).

\textbf{Discharging phase:}

3. Redistribute charge in the graph according to a set of discharging rules.

4. Compute the total charge again (using the specific properties of the graph),
and derive a conclusion.

A configuration in a graph $G$ can be any structure in $G$ (often a specified sort of subgraph). A configuration is reducible for a graph property $Q$ if it cannot occur in a minimal graph not having property $Q$. The method of
discharging is used to show that a set of reducible configurations is unavoidable in the class of graphs being discussed which  establishes that the property $Q$ cannot have a counterexample in the class. Let $d_G(v)$ or simply $d(v)$ denote the degree (number of neighbors) of vertex $v$ in $G$, and let $d(G)$  denote the average of the vertex degrees in $G$. Degree charging is the assignment to each vertex $v$ of an initial
charge equal to $d(v)$.   In the following, we present the  total coloring results on planar graphs.  \\

  The total coloring conjecture  was verified for planar graphs with $\Delta(G)\leq 5$ ~\cite{kos96}. Sanders and Zhao~\cite{95} showed that every planar graph with $\Delta(G)\leq 7$ is 9-total colorable. Yap ~\cite{79} verified TCC for planar graphs with $\Delta(G)= 8$. In ~\cite{kss08}, Kowalik et al. proved that any planar graph with $\Delta(G)\geq9$ is type-I.    Shen et al.~\cite{91} posed a conjecture on the total coloring of planar graph which states that ``Planar graphs with $4\leq \Delta(G)\leq 8$ are $\Delta(G)+1$-total colorable". Wang et al.~\cite{12}  proved that  for a planar graph $G$ with maximum degree $\Delta(G)$ and girth $g$ such that $G$ has no cycles of length from $g+1$ to $t, \ t>g$, the total chromatic number is $\Delta(G)+1$ provided $(\Delta(G), g, t)\in \{(5,4,6), (4,4,17)\}$ or $\Delta(G)=3$
and $(g,t)\in\{(5,13), (6,11), (7,11), (8,10), (9,10)\}$, where each vertex is incident with at most one $g$-cycle. For more details we  refer the excellent survey  by Borodin ~\cite{80}.

\begin{center}
    \scalebox{1} 
    {
        \begin{pspicture}(0,-2.5189064)(5.8984375,2.5189064)
            \pstriangle[linewidth=0.04,dimen=outer](2.6135938,0.72046876)(1.64,1.24)
            \psdots[dotsize=0.2](2.6335938,1.9604688)
            \psdots[dotsize=0.2](3.4135938,-0.33953115)
            \usefont{T1}{ptm}{m}{n}
            \rput(2.6264062,2.3304687){$u_1$}
            \usefont{T1}{ptm}{m}{n}
            \rput(4.0264063,0.85046875){$u_3$}
            \usefont{T1}{ptm}{m}{n}
            \rput(1.3464062,0.6904687){$u_2$}
            \usefont{T1}{ptm}{m}{n}
            \rput(4.0264063,-0.32953125){$u_4$}
            \usefont{T1}{ptm}{m}{n}
            \rput(4.0064063,-1.8095311){$u_5$}
            \psline[linewidth=0.04cm](3.4140625,0.73890626)(3.3940625,-1.6810937)
            \usefont{T1}{ptm}{m}{n}
            \rput(2.9232812,-2.2910938){Fig. 1.  Reducible configuration from ~\cite{91}}
        \end{pspicture}
    }
\end{center}

  First we start with results involving planar graphs with maximum degree at
least six.     As far as we know  the first work on the  total coloring with
$\Delta(G)=6$ was given by  Wang et al.~\cite{92}. They verified TCC for planar
graphs without 4-cycles.  Shen et
al. improved the result by showing that the planar graph without 4-cycles is
type-I~\cite{91} with the reducible configuration shown in
Fig.1.  Sun et al.~\cite{60} proved that every planar graph $G$ with maximum
degree 6 is totally 8-colorable if no two triangles in $G$ share a common edge
(which implies that every vertex $v$ in $G$ is incident with at most $\lfloor
\frac{d(v)}{2}\rfloor$ triangles. In other words,  every vertex is missing
either a 3-cycle or a 4-cycle). They also proved that a planar graph without
adjacent triangles and without cycles of length $k$,
$k\geq 5$ is type-I. Nicolas Roussel~\cite{62} strengthened the result of ~\cite{60} by
showing that  a planar graph $G$ is total 8-colorable if every vertex of $G$ is
missing some $k_v$-cycle for $k_v \in \{3,4,5,6,7,8\}$. In 2017,   Zhu and
Xu~\cite{51} improved the result of Sun et al.  ~\cite{60} to show that TCC holds for
planar graphs $G$ with $\Delta(G)=6,$ provided $G$ does not contain any subgraph
isomorphic to a 4-fan.

Further improvements on the total coloring of planar graph with $\Delta\geq 6$
as follows. Zhang and Wang~\cite{65} showed that every planar graph with
$\Delta\geq 6$ and without adjacent short cycles (a cycle of length at most 4)
is $\Delta+1$-total-colorable.  Hou et al.~\cite{8} proved that a planar graph
$G$ is type-I if $\Delta(G)\geq 5$ and $G$ contains neither 4-cycles nor
6-cycles or $\Delta(G)\geq 6$ and $G$ contains neither 5-cycles nor 6-cycles. A chordal $k$-cycle is a $k$-cycle with at least
one chord.
In~\cite{6}, Wu et al. improved the result of ~\cite{8} by showing that a planar graph with
maximum degree $\Delta(G)$ is $M$-totally colorable if it contains neither chordal
5-cycle nor chordal 6-cycle, where $M=\text{max}\{7,\Delta(G)+1\}$.
 Dong et al.~\cite{48}
proved that  a planar graph $G$ where no 6-cycles has a chord satisfies TCC
provided $\Delta(G)\geq 6$.  Li~\cite{50} verified TCC  for planar graphs  with
maximum degree six, if for each vertex $v$, there is an integer $k_v \in
\{3,4,5,6\}$ such that $G$ has no $k_v$-cycle  containing $v$.

A notion closely related to total coloring of graphs is  list total coloring of graphs. Suppose that a set $L(x)$ of
colors, called a list of $x$, is assigned to each element $x\in V(G)\cup E(G)$. A total coloring $\phi$ is called a list
total coloring of $G$ or $L$-coloring, if $\phi(G)\in L(x)$ for each element $x\in V(G)\cup E(G)$. If $|L(x)|=k$ for
every $x\in V(G)\cup E(G)$, then a total $L$- coloring is called a list total $k$ coloring and we say that $G$ is
totally $k$-choosable and the minimum integer $k$ for which $G$ is total $k$-choosable is the total choosability of $G$.
Liu  et al.~\cite{49} proved that a planar graph $G$ is total $\Delta(G)+2$-choosable ($\Delta(G)+2$-total colorable)
whenever (1) $\Delta(G)\geq 7$ and $G$ has no adjacent triangles or (2) $\Delta(G)\geq 6$ and $G$ has no intersecting
triangles or (3) $\Delta(G)\geq 5$ and $G$ has no adjacent triangles and $G$ has no $k$-cycles for some integer $k\in
\{5,6\}$.

We now list  the results on the total coloring of planar graph with maximum degree at least 7. The first work on this direction
is due to Sanders and Zhao~\cite{95}.
\begin{center}
    \scalebox{1} 
    {
        \begin{pspicture}(0,-3.9289062)(12.44,3.8889062)
            \pstriangle[linewidth=0.04,dimen=outer](0.69804156,2.5747952)(1.14,1.16)
            \psline[linewidth=0.04cm](1.2280415,2.5947952)(1.2280415,0.81479514)
            \psdots[dotsize=0.2](0.6880415,3.714795)
            \psdots[dotsize=0.2](1.2480415,1.6947951)
            \usefont{T1}{ptm}{m}{n}
            \rput(0.453125,0.7189062){a}
            \rput{-178.91699}(9.035453,6.4149876){\pstriangle[linewidth=0.04,dimen=outer](4.548042,2.5547953)(1.68,1.22)}
            \pstriangle[linewidth=0.04,dimen=outer](4.5480413,1.3747951)(1.68,1.22)
            \psdots[dotsize=0.2](5.3680415,3.774795)
            \psdots[dotsize=0.2](5.3480415,1.3947952)
            \psline[linewidth=0.04cm](5.3480415,1.4147952)(5.6680417,0.93479514)
            \usefont{T1}{ptm}{m}{n}
            \rput(4.644219,0.8389062){b}
            \psdiamond[linewidth=0.04,dimen=outer](8.278042,2.714795)(0.73,0.92)
            \psline[linewidth=0.04cm](7.5880413,2.714795)(8.948042,2.714795)
            \psdots[dotsize=0.2](8.268042,3.6347952)
            \psdots[dotsize=0.2](8.268042,1.7947952)
            \psline[linewidth=0.04cm](8.288041,1.7747952)(8.288041,0.7747951)
            \usefont{T1}{ptm}{m}{n}
            \rput(8.247969,0.4989063){c}
            \psdiamond[linewidth=0.04,dimen=outer](1.79,-1.0510937)(0.55,1.14)
            \psline[linewidth=0.04cm](0.7,-2.1310937)(2.92,-2.1310937)
            \psline[linewidth=0.04](2.32,-1.0510937)(3.48,-1.0510937)(2.92,-2.1310937)
            \psline[linewidth=0.04](1.24,-1.0510937)(0.02,-1.0510937)(0.7,-2.1310937)
            \psdots[dotsize=0.2](2.32,-1.0710938)
            \psdots[dotsize=0.2](1.3,-1.0510937)
            \psdots[dotsize=0.2](2.92,-2.1110938)
            \psdots[dotsize=0.2](0.74,-2.1110938)
            \pspolygon[linewidth=0.04](6.5,-2.1310937)(7.12,-1.3310938)(7.1,-0.57109374)(6.5,-1.3310938)(5.88,-0.5310938)(5.9,-1.3310938)
            \psline[linewidth=0.04cm](6.48,-1.3110938)(6.5,-2.0910938)
            \psline[linewidth=0.04](7.12,-1.3110938)(7.88,-1.2910937)(7.14,-2.1310937)(5.9,-2.1110938)(5.22,-1.3310938)(5.92,-1.3310938)
            \psdots[dotsize=0.2](6.48,-1.2910937)
            \psdots[dotsize=0.2](7.14,-1.3110938)
            \psdots[dotsize=0.2](7.14,-2.1310937)
            \psdots[dotsize=0.2](5.92,-1.3310938)
            \psdots[dotsize=0.2](5.92,-2.1310937)
            \pspolygon[linewidth=0.04](10.82,-2.2910938)(11.44,-1.4910938)(11.42,-0.73109376)(10.82,-1.4910938)(10.2,-0.69109374)(10.22,-1.4910938)
            \psline[linewidth=0.04cm](10.8,-1.4710938)(10.82,-2.2510939)
            \psline[linewidth=0.04](11.44,-1.4710938)(12.2,-1.4510938)(11.46,-2.2910938)(10.22,-2.2710938)(9.54,-1.4910938)(10.24,-1.4910938)
            \psline[linewidth=0.04](10.8,-2.2910938)(10.32,-3.1510937)(9.66,-3.1510937)(10.22,-2.2710938)
            \psdots[dotsize=0.2](10.34,-3.1710937)
            \psdots[dotsize=0.2](10.2,-2.3310938)
            \psdots[dotsize=0.2](10.2,-1.4910938)
            \psdots[dotsize=0.2](10.84,-1.4510938)
            \psdots[dotsize=0.2](11.46,-1.5110937)
            \psdots[dotsize=0.2](11.48,-2.2710938)
            \pspolygon[linewidth=0.04](11.46,1.3089062)(12.32,2.5089064)(12.32,3.6489062)(11.5,2.5089064)(10.68,3.7289062)(10.68,2.4689062)(10.7,2.4489062)
            \psline[linewidth=0.04cm](11.5,2.5289063)(11.48,1.3289063)
            \psdots[dotsize=0.2](11.5,2.5089064)
            \psdots[dotsize=0.2](10.7,2.4689062)
            \psdots[dotsize=0.2](12.32,2.4689062)
            \usefont{T1}{ptm}{m}{n}
            \rput(11.464531,0.93890625){d}
            \usefont{T1}{ptm}{m}{n}
            \rput(1.668125,-2.5210938){e}
            \usefont{T1}{ptm}{m}{n}
            \rput(6.5314064,-2.5610938){f}
            \usefont{T1}{ptm}{m}{n}
            \rput(11.281406,-3.0010939){g}
            \usefont{T1}{ptm}{m}{n}
            \rput(5.4979687,-3.7010937){Fig. 2.  Reducible configurations from ~\cite{5}}
        \end{pspicture}
        }

    \end{center}

    Shen and Wang~\cite{5} showed that the planar graphs with maximum degree 7 and without 5-cycles are 8-totally colorable. Fig.2. shows the reducible configuration used in ~\cite{5}.
    Chang et al. ~\cite{63} proved that a planar graph $G$ with maximum
    degree 7, with the additional property that for every vertex $v$, there is an integer $k_v \in
    \{3,4,5,6\}$ so that $v$ is not incident with any $k_v$-cycle, is type-I.  Wang and  Wu~\cite{46} proved that a planar
    graph of maximum degree $\Delta(G)\geq 7$ is $\Delta(G)+1$-totally colorable if no 3-cycle has a common vertex with a
    4-cycle or no 3-cycle is adjacent to a cycle of length less than 6.
    In~\cite{13}, Wang et al.   proved that for any planar graph
    with maximum degree $\Delta(G)\geq 7$ and without intersecting 3-cycles (two cycles of length 3 are not incident with a
    common vertex), and without intersecting 5-cycles,  the total chromatic number is $\Delta(G)+1$~\cite{14}.    Wu  et al.~\cite{1} proved that for a planar graph
    with maximum degree at least 7 and without adjacent 4-cycles the total chromatic number is $\Delta(G)+1$.

    The total chromatic number of a planar graph with $\Delta(G)\geq 7$ and without chordal 6-cycles~\cite{15} and without
    chordal 7-cycles~\cite{68} is $\Delta(G)+1$.

    We now turn to the results on maximum degree at least 8. There are many works on planar graphs with maximum degree at
    least 8. Yap~\cite{79} verified the TCC for planar graphs with $\Delta(G)\geq 8$.

    Roussel and  Zhu~\cite{7} proved that for a planar graph $G$ with maximum degree 8, $\chi''(G)=9$, if there is no
    $k_x$-cycle which contains $x$, where $x$ is a vertex in $G$ and $k_x\in \{3,4,5,6,7,8\}$.  This is an improvement
    over~\cite{103}.  The reducible configurations used by Roussel and  Zhu  are given in Fig.3. Further, Wang et al.~\cite{47}, strengthened the result and proved that for a planar graph with
    $\Delta(G)\geq 8$, if for every vertex $v \in V$, there exists two integers, $i_v, j_v \in \{3,4,5,6,7,8\}$ such that
    $v$ is not incident with intersecting $i_v$-cycles and $j_v$-cycles, then the total chromatic number of $G$ is
    $\Delta(G)+1$. Wang et al.~\cite{9} showed that  the total chromatic number of planar graphs with $\Delta(G)\geq 8$ is
    $\Delta(G)+1$, if for every vertex $v\in V(G)$, there exists two integers $i_v, j_v \in \{3,4,5,6,7\}$ such that $v$ is
    not incident with adjacent $i_v-$cycles and $j_v-$ cycles.
    \begin{center}
        \scalebox{1} 
        {
            \begin{pspicture}(0,-1.7489063)(4.67375,1.7489063)
                \psdiamond[linewidth=0.04,dimen=outer](1.85375,0.45890626)(1.54,1.19)
                \psline[linewidth=0.04cm](1.85375,1.6089063)(1.85375,-0.69109374)
                \psline[linewidth=0.04cm](3.37375,0.46890625)(4.59375,0.44890624)
                \psdots[dotsize=0.2](1.85375,1.6289062)
                \psdots[dotsize=0.2](0.33375,0.46890625)
                \psdots[dotsize=0.2](1.85375,-0.69109374)
                \psdots[dotsize=0.2](4.55375,0.44890624)
                \psdots[dotsize=0.2](3.35375,0.46890625)
                \usefont{T1}{ptm}{m}{n}
                \rput(1.7698437,-1.5210937){Fig. 3.  Reducible configuration from                 ~\cite{7}}
            \end{pspicture}
        }
    \end{center}

    Xu and  Wu~\cite{66} proved that if $G$ is a planar graph with maximum degree at least 8 and every 7-cycle of $G$
    contains at most two chords, then $G$ has a $(\Delta(G)+1)$-total-coloring.  Wang et al.~\cite{69}, considered planar
    graphs $G$ with maximum degree $\Delta(G)\geq 8,$ and showed that if $G$ contains no adjacent $i,j$-cycles with two
    chords for some $i,j\in \{5,6,7\}$, then $G$ is total $(\Delta(G)+1)$-colorable. Jian Chang et al.~\cite{11} proved that
    planar graphs with maximum degree at least 8 and without 5-cycles with two chords  are $\Delta(G)+1$ total colorable.

    In~\cite{70}, Cai et al. proved that the planar graphs with maximum degree 8 and without intersecting chordal 4-cycles are
    9-totally colorable.  Xu et al.~\cite{74} proved that if $G$ is a planar graph with maximum degree at least 8 and every
    6-cycle of $G$ contains at most one chord or any chordal 6-cycles are not adjacent, then $G$ has a $\Delta(G)+1$-total
    coloring.  In 2014, Wang et al.~\cite{16} showed that a planar graph with $\Delta(G)\geq 8$ and without adjacent cycles
    of size $i$ and $j$, for some $3\leq i\leq j \leq 5$, is $(\Delta(G)+1)$-total colorable.

    Later, Wang et al.~\cite{10} proved that a planar graph with $\Delta(G)\geq 8$ and if $v\in V(G)$ is not incident with
    chordal 6-cycle, or chordal 7-cycle or 2-chordal  5-cycle, then it is type-I. This is a generalisation of ~\cite{16}.

    In~\cite{17}, Chang et al. proved that a planar graph with maximum degree 8 is 9-total colorable if for every vertex
    $v$, $v$ is incident with at most $d(v)-2\lfloor\frac{d(v)}{5} \rfloor$ triangles. This is a generalisation of ~\cite{70} and ~\cite{11}.


    There are some other classes  of graphs which are similar to planar graphs. We discuss the total coloring of some of
    them.

    A graph is called 1-planar, if it has at most one crossing edge. The
    following are some results on total coloring of
    1-planar graphs.   Zhang et al.~\cite{45} proved that each 1-planar graph with $\Delta(G)\geq 16 $ is
    $(\Delta(G)+2)$-total colorable and $(\Delta(G)+1)$-total colorable if $\Delta(G)\geq 21$. J$\acute{\text u}$lius
    Czap~\cite{2} studied the 1-planar graphs and gave some upper bound for the total chromatic number. He showed that a
    1-planar graph with $\Delta(G) \geq 10$ satisfies TCC if $\chi(G)\leq 4$. He also proved that if $G$ is a 1-planar graph
    without adjacent triangles and with $\Delta(G) \geq 8$, then $\chi''(G)\leq \Delta(G)+3$ and if $\chi(G)\leq 4$, then
    $\chi''(G)\leq \Delta(G)+2$. Xin Zhang et al.~\cite{4} showed that for a 1-planar graph $G$, if  $\Delta(G)\leq r$ and
    $r\geq 13$, where $r$ is an integer, then $\chi''(G)\leq r+2$.

    An outerplanar graph is a planar graph that has a plane embedding such
    that all vertices lie on the boundary of the outer face.  In~\cite{3},
    Y. Wang and W.  Wang characterized the adjacent vertex distinguishing
    total chromatic number of outerplanar graphs. They proved that, if $G$
    is an outerplane graph with $\Delta(G)\geq 4$, then $\chi''_a(G)\leq
    \Delta(G)+2$ and so the TCC is satisfied. They also proved that, if
    $G$ is an outerplane graph with $\Delta(G)\geq 4$ and without adjacent
    vertices of maximum degree, then $\chi''_a(G)= \Delta(G)+1$ and hence
    $G$ is type-I graph.

    A graph is pseudo-outerplanar if each of its blocks has an embedding in the plane so that the vertices lie on a fixed
    circle and the edges lie inside the disk of this circle with each of them crossing at most one another.  In~\cite{67},
    Xin Zhang and Guizhen Liu verified TCC for pseudo-outerplanar graphs and proved that the total chromatic number of every
    outerplanar graph with $\Delta(G)\geq 5$ is $\Delta(G)+1$.  Xin Zhang~\cite{64} proved that every pseudo-outerplanar
    graph with $\Delta(G)\geq 5$ is totally $\Delta(G)+1$-choosable and
    hence the total chromatic number also has this as upper bound.

    Similar to planar graphs there is another type of graphs called toroidal graphs which  are embedding of graphs on torus
    such that there is no crossing edges.  Tao Wang~\cite{72} proved that if $G$ is a 1-toroidal graph with maximum degree
    $\Delta(G)\geq 11$ and without adjacent triangles, then $G$ has a total coloring with at most $\Delta(G)+2$ colors.


    \section{Non-planar graphs}

    \subsection{Circulant Graph}

    For a sequence of positive integers $1\leq d_1<d_2<...<d_l\leq  \lfloor \frac{n}{2} \rfloor$, the circulant graph
    $G=C_n(d_1,d_2,...,d_l)$ has vertex set $V=Z_n=\{0,1,2,...,n-1\}$, two vertices $x$ and $y$ being adjacent iff $x=(y\pm
    d_i) \mod n$ for some $i, 1\leq i\leq l.$

    Riadh Khennoufa and  Olivier Togni~\cite{61} studied total colorings of circulant graphs and proved that every 4-regular
    circulant  graphs $G=C_{5p}(1,k)$ and $C_{6p}(1,k)$  are type-I graphs for any positive integer $p$ and $k<\frac{5p}{2}$
    with $k\equiv 2 \mod 5$ or $k \equiv 3 \mod 5$ and $p\geq 3$ and $k< 3p$ with $k\equiv 1 \mod 3$ or $k\equiv 2 \mod 3$
    respectively.

    A graph is a \textit{power of cycle}, denoted $C^k_n$, $n$ and $k$ are integers, $1\leq k<\lfloor\frac{n}{2}\rfloor$, if
    $V(C^k_n)=\{v_0,v_1,...,v_{n-1}\}$ and $E(C^k_n)=E^1 \cup E^2 \cup ...\cup E^k$, where
    $E^i=\{e_0^i,e_1^i,...,e^i_{n-1}\}$ and $e_j^i=(v_j,v_{(j+i) \mod \ n })$ and $0 \leq j \leq n-1$, and $1\leq i\leq k$.

    Campos and de Mello~\cite{39} proved that $C_n^2, n\neq 7$, is type-I and $C_7^2$ is type-II. They ~\cite{18} 
    verified the TCC for powers of cycles $C_n^k, n$ even and
    $2<k<\frac{n}{2}$ and also showed that one  can obtain a
    $\Delta(G)+2$-total coloring for these graphs in polynomial time. They also proved that $C_n^k$ with $n\cong 0 \mod (\Delta(C_n^k)+1)$ are
    type-I and they proposed the following conjecture.  \begin{con} Let $G=C_n^k$, with $2\leq k<\lfloor \frac{n}{2}
        \rfloor$. Then,

        $\chi''(G)=\begin{cases}\Delta(G)+2, & \text{ if }k>\frac{n}{3}-1  \  \text{and } n \ \text{is \ odd}\\ \Delta(G)+1  &
        \text{ otherwise}.  \end{cases}$

    \end{con}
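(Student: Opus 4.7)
\noindent Since $C_n^k$ is $2k$-regular, the conjectured dichotomy is between $\chi''=2k+1$ (type-I) and $\chi''=2k+2$ (type-II). The Campos--de~Mello results already cited settle (i) $n$ even with $2<k<n/2$ and (ii) $n\equiv 0\pmod{2k+1}$, both type-I. The plan is therefore to establish the two remaining pieces: (A) if $n$ is odd and $k>\tfrac{n}{3}-1$ (equivalently $n\le 3k+2$), then $\chi''=2k+2$; and (B) if $n$ is odd with $n\ge 3k+3$ and $n\not\equiv 0\pmod{2k+1}$, then $\chi''=2k+1$.

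\smallskip
\noindent For (A) I would use a short counting argument. Since $\alpha(C_n^k)=\lfloor n/(k+1)\rfloor$, the hypothesis $n\le 3k+2$ yields $\alpha\le 2$. Suppose a $(2k+1)$-total coloring exists. Because $G$ is $2k$-regular, at every vertex each of the $2k+1$ colors appears exactly once, so for each color class with vertex part $V_c$ and edge part $E_c$ one has $|V_c|+2|E_c|=n$. With $n$ odd each $|V_c|$ is a positive odd integer, and $|V_c|\le\alpha\le 2$ then forces $|V_c|=1$ for every $c$. Summing over all $2k+1$ color classes gives $n=\sum_c|V_c|=2k+1$, contradicting $n\ge 2k+3$. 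The boundary $n=2k+1$ is $K_{2k+1}$, classically type-II.

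\smallskip
\noindent For (B), the plan is to construct an explicit $(2k+1)$-total coloring by exploiting the Cayley structure $C_n^k=\mathrm{Cay}(\mathbb{Z}_n,\{\pm 1,\ldots,\pm k\})$. Writing $n=q(2k+1)+r$ with $q\ge 1$ and $1\le r\le 2k$, one copies the Campos--de~Mello coloring of $C_{q(2k+1)}^k$ (colors in $\mathbb{Z}_{2k+1}$) onto $q$ consecutive blocks and splices in a ``defect patch'' of $r$ extra vertices. Inside the patch every uncolored element has a forbidden-color list of size at most $2k$, and one attempts completion either greedily (depending on the parity of $r$) or via the Combinatorial Nullstellensatz applied to the graph polynomial of the patch. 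The main obstacle is precisely this defect step: the list sizes are tight, the cyclic closure of the splice must simultaneously respect the rotational period of the block pattern on one side and the $r$ extra vertices on the other, and small $k$ or residues $r$ close to $k$ cause the patch to overlap itself along the cycle. I expect the resulting case analysis, indexed by $r$ and by $\gcd(r,2k+1)$, to be the genuine difficulty; it is what keeps the conjecture open rather than reducing to a routine extension of the $n\equiv 0\pmod{2k+1}$ case.
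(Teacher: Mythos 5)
First, note that what you are trying to prove is stated in the paper only as a conjecture (attributed to Campos and de~Mello); the paper contains no proof of it and merely records that Geetha et al.\ have verified it for certain values of $n$ and $k$. So there is no in-paper argument to compare against, and a complete proof should not be expected. With that said, your part (A) is genuinely correct and complete: for odd $n\le 3k+2$ one has $\alpha(C_n^k)=\lfloor n/(k+1)\rfloor\le 2$, in a $(2k+1)$-total coloring of a $2k$-regular graph every color touches every vertex exactly once so $|V_c|+2|E_c|=n$ forces each $|V_c|$ to be odd and hence equal to $1$, and summing gives $n=2k+1$, contradicting $k<\lfloor n/2\rfloor$ (which forces $n\ge 2k+3$ for odd $n$). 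This is the standard non-conformability argument and it does establish the type-II half of the conjecture.

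There are, however, two gaps. The smaller one is a misreading of what is known: you claim the Campos--de~Mello results settle the case ``$n$ even, $2<k<n/2$'' as type-I, but the cited result only verifies TCC there, i.e.\ $\chi''\le\Delta+2$; type-I for even $n$ is only known when $n\equiv 0\pmod{2k+1}$ (and for $k=2$). So your reduction to ``two remaining pieces'' omits a third open piece, namely even $n$ with $n\not\equiv 0\pmod{2k+1}$. The larger gap is part (B) itself: the block-plus-defect-patch construction is only a plan, and the completion step (greedy or via the Combinatorial Nullstellensatz on lists of size $2k$ against $2k+1$ colors) is exactly where such attempts fail --- the lists are tight and the cyclic splice destroys the periodicity that makes the $n\equiv 0\pmod{2k+1}$ coloring work. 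You correctly identify this as the genuine difficulty, but as written the proposal proves only the type-II direction and leaves the type-I direction (for both odd $n\ge 3k+3$ and even $n$ not divisible by $2k+1$) unproved, which is consistent with the statement's status as an open conjecture.
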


    Geetha et al.~\cite{96} proved this conjecture for certain values of $n$ and $k$. They also verified  TCC for the
    complement of powers of cycles $\overline{C^k_n}$. In particular, they proved that  $\overline{C_n^2}$ is type-II for $n\leq 8$.

    Cayley graphs are those whose vertices are the elements of groups and adjacency relations are defined by subsets of the
    groups. Cayley graphs contain long paths and have many other nice combinatorial properties. They have  been used to
    construct other combinatorial structures. Also, for the constructions of various communication networks, and difference
    sets in design theory. Cayley graphs have been used to analyze algorithms for computing with groups.

    Let $\Gamma$ be a multiplicative group with identity 1. For $S\subseteq \Gamma, 1\notin S \text{ and }
    S^{-1}=\{s^{-1}:s\in S\}=S$ the \textit{Cayley Graph} $X=Cay(\Gamma, S)$ is the undirected graph having vertex set
    $V(X)=\Gamma$ and edge set $E(X)=\{(a,b): ab^{-1}\in S\}$.

    For a positive integer $n>1$ the \textit{unitary Cayley graph} $X_n=Cay(Z_n, U_n)$ is defined by the additive group of
    the ring $Z_n$ of integer modulo $n$ and the multiplicative group $U_n$ of its units. If we represent the elements of
    $Z_n$ by the integers 0,1,...$n-1$, then it is well known that \begin{center} $U_n=\{a\in Z_n: gcd(a,n)=1\}$.
    \end{center} So $X_n$ has vertex set $V(X_n)=X_n=\{0,1,2,...,n-1\}$ and edge set \begin{center} $E(X_n)=\{(a,b): a,b \in
    Z_n, gcd(a-b,n)=1\}$. \end{center} Boggess et al. ~\cite{bhjk08} studied the structure of unitary cayley graphs. They
    have also discussed chromatic number, vertex and edge connectivity, planarity and crossing number. Klotz and Sander
    ~\cite{ws07} have determined the clique number, the independence number and the diameter. They have given a necessary
    and sufficient condition for the perfectness  of $X_n$.

    The graph $X_n$ is regular of degree $U_n=\varphi(n)$ denotes the Euler function. Let the prime factorization of $n$ be
    $p_1^{\alpha_1} p_2^{\alpha_2}...p_t^{\alpha_t}$ where $p_1<p_2<...<p_t$.  If $n=p$ is a prime number, then $X_n=K_p$ is
    the complete graph on $p$ vertices. If $n=p^\alpha$ is a prime power then $X_n$ is a complete $p$-partite graph. In the
    following theorem we prove TCC holds for unitary Cayley graphs.

    \begin{thm} A unitary Cayley graph $X_n$ is $(\Delta(X_n)+2)$ - total
    colorable.  \end{thm}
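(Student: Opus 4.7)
The plan is to argue by cases on the prime factorization $n=p_1^{\alpha_1}\cdots p_t^{\alpha_t}$. First, suppose $n$ is even. Then $\gcd(a-b,n)=1$ forces $a-b$ to be odd, so every edge of $X_n$ joins vertices of opposite parity; hence $X_n$ is bipartite. Therefore $\chi(X_n)=2$, and by K\"{o}nig's theorem $\chi'(X_n)=\Delta(X_n)$. Using the elementary bound $\chi''(G)\le\chi(G)+\chi'(G)$ (color vertices and edges with disjoint palettes) immediately gives $\chi''(X_n)\le\Delta(X_n)+2$.

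Next, suppose $n=p^{\alpha}$ with $p$ odd. Two residues are adjacent in $X_n$ iff their difference is not divisible by $p$, so $X_n$ is isomorphic to the balanced complete $p$-partite graph whose parts are the mod-$p$ residue classes, each of size $p^{\alpha-1}$. Since complete balanced multipartite graphs are known to satisfy TCC (e.g., by a classical result of Hoffman and Rodger), we obtain $\chi''(X_n)\le\Delta(X_n)+2$ in this case as well. In particular, if $n$ itself is an odd prime then $X_n=K_n$, whose total chromatic number equals $n=\Delta(X_n)+1$.

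The remaining case is $n$ odd with $t\ge 2$ distinct prime factors; this is where I expect the main work. Here the plan is to exploit the CRT isomorphism $Z_n\cong\prod_i Z_{p_i^{\alpha_i}}$, which, on units, identifies $X_n$ with the categorical (tensor) product $X_{p_1^{\alpha_1}}\otimes\cdots\otimes X_{p_t^{\alpha_t}}$ of graphs already handled by the previous case. My approach would then be to lift total colorings of the factors to $X_n$: take an equitable vertex coloring of one chosen factor to index color classes of $X_n$, then fold in edge colorings of the remaining factors while respecting incidences coordinate-wise, using the multiplicativity $\Delta(X_n)=\prod_i\varphi(p_i^{\alpha_i})$ to keep the palette size in check.

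The hard part is this final step. In a tensor product, two edges share an endpoint precisely when their projections share an endpoint in every factor, so the palette-merging must track simultaneous incidence constraints across all $t$ coordinates while staying within $\varphi(n)+2=\prod_i\varphi(p_i^{\alpha_i})+2$ colors. I expect the multiplicativity of $\varphi$ together with the structural simplicity of each factor (balanced complete multipartite, with highly symmetric total colorings) to make this bookkeeping workable, but this is where the bulk of the effort will go. If a direct construction proves unwieldy, the fallback is induction on $t$, peeling off one prime-power factor at a time and treating $X_n$ as a suitable blow-up of $X_{n/p_t^{\alpha_t}}$.
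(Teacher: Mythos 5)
Your first two cases are fine and agree with the paper: for even $n$ the graph is bipartite and $\chi''\le\chi+\chi'=\Delta+2$, and for $n=p^{\alpha}$ the graph is a complete $p$-partite graph for which TCC is known. The problem is the third case, $n$ odd with at least two distinct prime factors, which is the only case with real content --- and there you have a plan, not a proof. The CRT identification of $X_n$ with the direct (tensor) product $X_{p_1^{\alpha_1}}\times\cdots\times X_{p_t^{\alpha_t}}$ is correct, but there is no general theorem that lets you lift total colorings across a direct product while staying within $\Delta+2$ colors; the known results on total coloring of direct products (cited in this very survey) cover only special pairs of factors, and your description of the palette-merging step is explicitly conjectural (``I expect \dots the bookkeeping to be workable''). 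As written, the central case of the theorem is not established.

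The paper's argument for that case is much more elementary and avoids products entirely. Since $p_1$ is the smallest prime divisor of $n$, any two distinct residues in a block $\{kp_1,kp_1+1,\dots,(k+1)p_1-1\}$ differ by an integer $d$ with $1\le d\le p_1-1$, which is automatically coprime to $n$; hence these $n/p_1$ blocks induce vertex-disjoint copies of $K_{p_1}$ covering all vertices. Because $p_1$ is odd, each $K_{p_1}$ is type-I, so all vertices and all clique edges can be totally colored with $p_1$ colors. Deleting those clique edges leaves a $(\varphi(n)-p_1+1)$-regular graph, whose edges can be properly colored with $\varphi(n)-p_1+2$ fresh colors by Vizing's theorem, for a total of $\varphi(n)+2=\Delta(X_n)+2$ colors. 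If you want to salvage your write-up, replacing the tensor-product step with this clique-decomposition argument is the direct fix; otherwise you must actually prove the lifting lemma you are assuming.
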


    \begin{proof} We know that a unitary Cayley graph can be obtained from
        a balanced $r$ partite     graph by deleting some edges. Suppose
        $n=p$ is a prime number, then $X_n$ is the complete graph on $p$
        vertices.  Also, if $n=p^\alpha$, a prime power, then $X_n$ is a
        complete $p$-partite graph and TCC holds for these two graphs
        ~\cite{79}.

        When $n=2k, k \in N$, then the unitary Cayley graph is a bipartite
        graph and any bipartite graph is total colorable.

        Suppose $n \not \equiv 0 \mod 2 $. As $p_1$ is the smallest prime,
        $kp_1,kp_1+1,...,(k+1)p_1-1,$ where $k=0,1,2,...,\frac{n}{p_1}-1$
        induces $\frac{n}{p_1}$  vertex disjoint cliques each of order $p_1$.
        Since $p_1$ is odd, we can color all the elements of these
        $\frac{n}{p_1}$ cliques using $p_1$ colors ~\cite{79}. Now remove the
        edges of these cliques. The remaining graph is a
        $\varphi(n)-p_1+1$-regular graph where the vertices are already
        coloured. We  color the  edges of this resultant graph with
        $\varphi(n)-p_1+2$ colors. Thus we have used $\varphi(n)+2$ colors
    for the total coloring of $X_n$.  \end{proof}

    In the following section, we look at graph products and papers on
    total colouring of product graphs.

    \subsection{Product Graphs} Graph products were first defined by Sabidussi~\cite{93} and Vizing~\cite{94}. A lot of work
    has been done on various topics related to graph products, but on the other hand there are still many questions open. There
    are four standard graph products, namely, $cartesian \ product$ ($G \Box H$),  $direct \ product$ ($G \times H$),
    $strong \ product$ ($G\boxtimes H$) and  \textit{lexicographic  product} ($G \circ H$).  In~\cite{93}, these products
    have been widely discussed with significant applications. The vertex
    sets of these products are same: $V(G) \times V(H)$.
    The edge sets are $E(G \Box H)=\{((g,h),(g',h'))| \ g=g', \ hh'\in E(H), \ \text{or} \ gg' \in E(G), \ h=h'\}$, \\ $E(G
    \times H)= \{((g,h),(g',h'))| \ gg' \in E(G) \ \text{and} \ hh'\in E(H)\}$, \\ $E(G\boxtimes H)=E(G\Box H)\cup E(G\times
    H)$ and \\ $E(G \circ H)=\{((g,h),(g',h'))| \ g=g', \ hh'\in E(H), \ \text{or} \ gg' \in E(G) \}$. \\ The first three
    products are commutative and the lexicographic product is associative but  not commutative.\\

    The total coloring conjecture was verified for the cartesian product
    of two graphs. Seoud et al.~\cite{35, SMWW97}  determined the total
    chromatic number of the join of two paths, the cartesian product of
    two paths, the cartesian product of a path and a cycle, certain
    classes of the corona of two graphs and the theta graphs.   Kemnitz
    and Marangio ~\cite{36} classified the cartesian product of complete
    graphs $K_n \Box K_m$ as type-I if $n\geq m\geq 4, n\equiv 0 \mod 4$
    or $n>m \geq 4, n\equiv 2 \mod 4$, where $n$ and $m$ are even and as
    type-II if $n$ is even  and $m$ odd and $n>(m-1)^2$. They also
    obtained the total chromatic number of the cartesian product of two
    cycles $C_n \Box C_m$,   $K_n \Box H$ and $C_n \Box H$, where
    $H$ is a bipartite graph. Using the fact that if $G$ is a regular
    graph with the adjacent vertex distinguishing chromatic index
    $\chi'_a=\Delta(G)+1$ then $\chi''(G)=\Delta(G)+1$,  Baril et al. ~\cite{BHO12} proved that $K_n
    \Box K_m$ is type-I if $m\text{ and }n$ are even. Still there are cases of these
    products of complete graphs which is not classified as type-I or
    type-II.  Equitable total chromatic number of a graph $G$ is the
    smallest integer $k$ for which $G$ has a $k$-total coloring such that
    the number of vertices and edges colored with each color differs by at
    most one. Tong Chunling et al. ~\cite{22} improved the  results of
    Seoud et al. ~\cite{SMWW97} and  Kemnitz et al. ~\cite{36}  by showing
    that the cartesian product of two cycles $C_m$ and $C_n, m, n\geq 3$
    have equitable total 5-coloring. That is, $C_n \Box C_m$ is type-I.
    Zmasek and $\check{\text Z}$erovnik~\cite{19} proved that if TCC holds
    for graphs $G$ and $H$, then it holds for the cartesian product $G
    \Box H$. They also proved that if the factor with largest vertex
    degree is of type-I, then the product is also of type-I. \\

    There are only a few results proved on total colorings of the other
    three product graphs. Katja Prnaver and Bla$\check{\text{z}}$
    Zmazek~\cite{24} verified the conjecture for direct product of a path
    and  any graph $G$ with $\chi'(G)=\Delta(G).$ Geetha and
    Somasundaram~\cite{58} proved that  direct
    product of two even complete graphs are type-I and the direct product
    of two cycles $C_m$ and $C_n$ are type-I for certain values of  $m$ and $n$.
    They also proved that if $K_2\boxtimes H$ ($K_2 \circ H$) satisfies TCC,
    then $G\boxtimes H$ ($G \circ H$)  satisfy  TCC, where $G$ is  any
    bipartite graph.  Mohan et al.~\cite{59} proved that the corona product
    of two graphs $G$ and $H$ is always type-I, provided $G$ is  total
    colorable and $H$ is either a cycle, a complete graph or a bipartite graph.
    The \textit{deleted lexicographic product} of two graphs $G$ and $H$,
    denoted by $D_{lex}$ $(G,H)$, is a graph with the vertex set $V(G)
    \times V(H)$ and the edge set $\{((g, h), (g', h') ) :  (g, g') \in
    E(G)  $ and  $ h \neq h'$, or  $\ (h, h') \in E(H)$  and $ g=g' \}$.
    Similar to lexicographic product, $D_{lex}(G, H)$ and $D_{lex}(H, G)$
    are not necessarily isomorphic.  Recently, Vignesh et al.
    ~\cite{VGS18} proved that the if $G$ is a bipartite graph and $H$ is  any total colorable graph then $G\circ H$ is also total colorable. They further
    show that  for any class-I graph $G$ and any graph $H$ with at least 3
    vertices,  $D_{lex}(G, H)$ is total colorable. In particular, if $H$
    is class-I then $D_{lex}(G, H)$ is also type-I.

    We present now the results on Sierpi$\acute{\text n}$ski graphs.

    \subsection{Sierpi$\acute{\text n}$ski Graphs}

    The Sierpi$\acute{\text n}$ski graphs $S(n,K_k)$, $k,n\geq1$, $k,n \in \mathbb{N} $ is defined on the  vertex set $\{1,
    2, ...,k\}^n$, where $K_k$ is complete graphs on $k$ vertices. Two different vertices $u=(u_1, u_2,..., u_n)$ and
    $v=(v_1, v_2,...,v_n)$ are adjacent if and only if there exists a $h\in \{1, 2, ..., n\}$ such that\\ \indent a)
    $u_t=v_t$ for $t=1,2,...,h-1$;\\ \indent b) $u_h\neq v_h$; and\\ \indent c) $u_t=v_h  \text { and }  v_t=u_h$ for
    $t=h+1,..., n$.\\

    Sierpi$\acute{\text n}$ski gasket graphs $S_n$ were introduced by Scorer, Grundy and Smith~\cite{40}. The graph $S_n$ is
    obtained from the Sierpi$\acute{\text n}$ski graphs $S(n,3)$ by contracting every edge of $S(n,3)$ that lies in no
    triangle. Marko Jakovac and Sandi Klav$\check{\text z}$ar~\cite{90} generalized the graphs $S(n,3)$ to
    Sierpi$\acute{\text n}$ski graphs $S(n,k)$ for $k\geq 3$ and determined the total colorings of the Sierpi$\acute{\text
    n}$ski gasket graphs $S_n$. In particular they proved that for any $n\geq 2$ and any odd $k \geq 3$, $S(n, k)$ and $S(n,
    4)$ are type-I graphs. For the even values of $k \geq 6$, they believed that $S(n, k)$ is always type-II and hence they
    proposed a conjecture that $S(n, k)$ is type-II. After three years Andreas M. Hinz, Daniele Parisse ~\cite{20} disproved
    the conjecture based on the canonical total colorings. Also they prove that the  Hanoi graphs $H_p^n$  are type-I
    graphs.  Geetha and Somasundaram~\cite{57} considered the generalized Sierpi$\acute{\text n}$ski graphs $S(n, G)$ and
    proved that $S(n, G)$ is type-I for certain classes of $G$.

    We now turn our attention to Chordal graphs.

    \subsection{Chordal Graphs}

    Chordal graphs are graphs in which every induced cycle is a 3-cycle. They form a very important class  of graphs due to
    the fact that  they have good algorithmic  properties. The TCC is verified for several subfamilies
    of chordal graphs like interval graphs, split graphs and strongly chordal graphs. A graph $G$ is called split graph if
    its vertex set can be partitioned in to two subsets $U$ and $V$ such that $U$ induces a clique and $V$ is independent
    set in $G$.  A color diagram $\mathcal{C}=\{R_1, R_2,..., R_k\}$ of frame $d=(d_1,d_2,...,d_k)$ is an ordered set of color arrays, where color array $R_i=\{c_{i,1},c_{i,2},...,c_{i,d_i}\}$, of length $d_i$, consists of distinct colors for all $1\leq i\leq k$. In~\cite{27}, Chen et al. proved that the split graphs satisfies TCC. They also proved that if $G$ is a split graph with
    $\Delta(G)$ even, then $G$ is type-I.  They extensively used  the concept of color diagram to prove these
    results.
    Campos et al.~\cite{28} gave  conditions for the split-indifference graph $G$ to be type-II and constructed a
    $\Delta(G)+1$-total colorings for the remaining.  Hilton~\cite{97} proved the following (it is known as Hilton's
    condition):     Let $G$ be a simple graph with an even number of vertices. If $G$ has a universal vertex, then $G$ is
    type-II if and only if $\left| E(\overline{G}) \right| + \alpha(\overline{G}) < \frac{|V(G)|}{2}$, where $\alpha(\bar{G})$ is the
    cardinality of a maximum independent set of edges of $\overline{G}$.

     Three-clique graphs are generalization of the
    split-indifference graphs. Campos et al.~\cite{28} proposed a conjecture based on the Hilton's condition.

    \begin{con} A 3-clique graph is type-II if and only if it satisfies Hilton's condition.  \end{con}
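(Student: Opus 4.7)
The plan is to decouple the two directions. The \emph{if} direction is immediate from Hilton's theorem cited above: any simple graph with even order, a universal vertex, and $|E(\overline{G})|+\alpha(\overline{G})<|V(G)|/2$ is already type-II, so no new argument is required in the 3-clique setting beyond verifying that these hypotheses are part of Hilton's condition. The substantive direction is the converse, that every 3-clique graph $G$ failing Hilton's condition is type-I, i.e.\ admits a total coloring with $\Delta(G)+1$ colors.

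For the converse, I would fix a partition $V(G)=V_1\cup V_2\cup V_3$ into three cliques and split into the three mutually exclusive ways Hilton's condition can fail: (i) $|V(G)|$ is odd; (ii) $|V(G)|$ is even but $G$ has no universal vertex; (iii) $|V(G)|$ is even, $G$ has a universal vertex, but $|E(\overline{G})|+\alpha(\overline{G})\geq |V(G)|/2$. Cases (i) and (ii) should be handled by adapting the explicit constructions of Campos et al.\ for split-indifference graphs: the odd order in (i) yields a free matching in a $1$-factor decomposition of the ambient $K_{|V(G)|}$ that can be reserved for vertex colors, while the absence of a universal vertex in (ii) assigns every vertex a non-neighbour whose edge color is available for reuse as that vertex's color, once a proper $\Delta(G)$-edge-coloring has been built from the three-clique decomposition by applying Vizing's theorem inside each $V_i$ and using K\"onig's theorem on the bipartite "cross" edges.

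Case (iii) is the main obstacle. Here $\Delta(G)=|V(G)|-1$ and the Hilton inequality supplies enough non-edges in $\overline{G}$ to potentially carry a perfect matching $M$ of $\overline{G}$. The idea is to use $M$ as the "missing" color class of a $(\Delta(G)+1)$-total coloring: matched vertex endpoints share a color, and then $G$ minus the induced color class must be properly $\Delta(G)$-edge-colored obeying this identification. I would attempt to locate such an $M$ compatible with the 3-clique partition, for instance by a Hall-type argument on the bipartite graphs $\overline{G}[V_i,V_j]$, then verify that the remaining structure admits the required edge coloring via a Vizing-adjacency argument applied block by block to each $V_i$. An inductive alternative is to delete a vertex from the smallest $V_i$ and apply the conjecture recursively, provided the removal preserves both the 3-clique class and the failure of Hilton's condition.

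The hard part will be the simultaneous construction in case (iii): one must satisfy the matching constraint in $\overline{G}$ and the color-permutation constraint in $G$ at the same time, and a three-clique decomposition provides far less rigidity than the interval representation exploited by Campos et al.\ in the split-indifference setting. I expect a refined form of Hilton's inequality that tracks the contribution of each $V_i$ to $|E(\overline{G})|$ and $\alpha(\overline{G})$ will be needed, together with a careful choice of deletion vertex in the inductive step so as to preserve both the 3-clique structure and the direction of the Hilton inequality; failing that, one might hope that 3-clique graphs form a "conformable" subclass in which the overfull obstruction is captured exactly by Hilton's condition, making the conjecture a consequence of general results on conformability.
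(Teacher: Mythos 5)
The statement you are trying to prove is not proved in the paper at all: it is an open conjecture, attributed to Campos et al.~\cite{28}, generalizing their classification of split-indifference graphs to the larger class of 3-clique graphs. There is no proof to compare against, and your text is, by its own admission, a research plan rather than a proof. The \emph{if} direction is indeed immediate from Hilton's theorem as quoted in the paper (a graph of even order with a universal vertex satisfying the inequality is type-II, regardless of any clique decomposition), so that half is fine but also contributes nothing new. Everything of substance lies in the converse, and there your argument has genuine gaps that you yourself flag: in case (iii) you do not establish that a suitable perfect matching of $\overline{G}$ compatible with the 3-clique partition exists, nor that the remaining graph admits the required $\Delta(G)$-edge-coloring respecting the vertex/edge identification; the proposed Hall-type and Vizing-adjacency arguments are named but not carried out, and the inductive alternative is not shown to preserve either the class or the failure of Hilton's condition.

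A further concrete worry: in cases (i) and (ii) you propose to ``adapt'' the Campos et al.\ construction, but that construction leans heavily on the indifference (unit interval) ordering of the vertices, which gives a consecutive structure to the three cliques that an arbitrary 3-clique graph lacks. Three arbitrary cliques with arbitrary cross edges is a much wilder object --- your closing remark that the decomposition ``provides far less rigidity'' is exactly the reason the adaptation is not routine, and no mechanism is offered to replace that rigidity. The hope that 3-clique graphs are conformable exactly when Hilton's condition fails is also unsubstantiated; conformability is necessary but not known to be sufficient for being type-I even in quite restricted classes. In short, what you have is a reasonable outline of how one might attack the conjecture, not a proof, and the statement remains open.
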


        A graph is dually chordal if it is the clique graph of a chordal graph. The class of dually chordal graphs
        generalize known subclasses of chordal graphs such as doubly chordal graphs, strongly chordal graphs, interval
        graphs, and indifference graphs. Figueiredo et al.~\cite{41}  proved that TCC holds for dually chordal graphs. A
        pullback from $G$ to $G'$ is a function $f: V(G) \rightarrow V(G')$, such that: (i) $f$ is a homomorphism and
        (ii) $f$ is injective when restricted to the neighborhood of $x, \forall x\in V(G)$. Based on this pullback method,
        they proved that if $\Delta(G)$ is even, then $G$ is type-I. A family of sets satisfies the Helly property if
        any subfamily of pairwise intersecting sets has nonempty intersection. A graph is neighborhood-Helly when the
        set $\{N(v): v \in V(G) \}$ satisfies the Helly property. A characterization of dually chordal graphs says that
        $G$ is dually chordal if and only if $G$ is neighborhood-Helly and $G^2$ is chordal. It is proved that~\cite{41}
        TCC  holds for neighborhood-Helly graphs $G$ such that $G^2$ is perfect.  They also proposed the
        following  problem which is still open:

        Determine the largest graph class for which all its odd maximum degree graphs are
        class-I and for which all its even maximum degree graphs are type-I.\\

        A graph is weakly chordal if neither the graph nor the complement of the graph has an  induced  cycle  on  five or  more
        vertices. A simple graph $G$ on $[n]= \{1, 2, ... , n\}$ is  threshold, if  $G$ can be built sequentially from
        the empty graph by adding vertices one at a time, where each new vertex is either isolated (nonadjacent to all the
        previous) or dominant (connected to all the previous).  A graph $G$ is said to be mock threshold if there is a vertex
        ordering $v_1, . . . , v_n$ such that for every $i \ (1 \leq i \leq n)$ the degree of $v_i$ in $G : {v_1, . . . , v_i}$ is
        0, 1, $i-2$, or $i-1$. Mock threshold graphs are a simple generalization of threshold graphs that, like threshold
        graphs, are perfect graphs. Mock threshold graphs are perfect and indeed weakly chordal but not necessarily chordal
        ~\cite{bsz18}. Similarly, the complement of a mock threshold graph is also mock threshold.

        In the following, we prove the TCC for Mock Threshold graphs.

        \noindent \textbf{Note:} A total coloring of $K_n$ can be constructed as follows: (This total coloring is due to Hinz and Parisse ~\cite{20})

        When $n$ is even, we first construct an edge coloring of $K_n$ and extend it. We denote $[n]_0=\{0,1,2,...,n-1\} $. For $k\in [n]_0$,
        let $\tau_k$ be the transposition of $k$ and $n-1$ on $[n]_0$. For even $n$, $c_n(i,j)=(\tau_i(j)+\tau_j(i)+2)\mod
        (n+1)$, for $i,j \in [n]_0, i\neq j$, defines a $(n+1)$-edge coloring. In this coloring assignment line $k\in [n]_0$
        will have the missing colors $k$ and $(k+1)\mod n$. We color $c_n(i)=i$ for all $i\in [n]_0$.\vspace{0.3cm}

        When  $n$ is odd,  we use the same coloring of $K_{n-1}$. In the coloring assignment of $K_{n-1}$, still the color
        $(k+1) \mod n$ is missing in line $k\in [n]_0$. We use these colors to the edges incident with $n^{th}$ vertex and
        color $n$ to the $n^{th}$ vertex.

        \begin{thm} Total coloring conjecture holds for any   Mock threshold graph $G$.  \end{thm}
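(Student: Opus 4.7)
The plan is to induct on $n=|V(G)|$, using two facts: mock threshold graphs form a hereditary class (removing any vertex of appropriate degree preserves the defining ordering), and the Note preceding the theorem supplies a total coloring of $K_n$ with at most $n+1$ colors. Base cases are immediate. For the inductive step, fix a mock threshold ordering $v_1,\ldots,v_n$, set $G':=G-v_n$, and let $d:=d_G(v_n)\in\{0,1,n-2,n-1\}$; by induction $\chi''(G')\leq\Delta(G')+2\leq\Delta(G)+2$.

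The four cases on $d$ split into two groups. When $d\in\{0,1\}$, a direct extension works: for $d=0$ assign any color to $v_n$, and for $d=1$ first color the pendant edge (at most $\Delta(G)$ colors are blocked at its existing endpoint) and then $v_n$ (only two colors are blocked). When $d=n-1$, $v_n$ is universal, so $\Delta(G)=n-1$ and $G\subseteq K_n$; restricting the Note's $K_n$-coloring to the elements of $G$ uses $n+1=\Delta(G)+2$ colors. When $d=n-2$ and $G$ contains any universal vertex, hereditariness lets me reorder so that vertex sits at position $n$, reducing to the previous case. When $d=n-2$, $\Delta(G)=n-2$, and $n$ is odd, the Note delivers $\chi''(K_n)=n=\Delta(G)+2$ and restriction again succeeds.

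The main obstacle is the remaining subcase $d=n-2$, $\Delta(G)=n-2$, $n$ even, where restricting the $K_n$-coloring loses by one color. Let $v_j$ be the unique non-neighbor of $v_n$, and split on $d_G(v_j)$. If $d_G(v_j)<n-2$ then $\Delta(G-v_n)\leq n-3$, so induction gives an $(n-1)$-total coloring of $G-v_n$; augmenting with a fresh color, each star-edge $v_nv_i$ has an available palette of size at least three at its $v_i$-endpoint, and extension reduces to a list edge coloring of $K_{1,n-2}$ together with a compatible choice of color for $v_n$. If $d_G(v_j)=n-2$ then $v_j$ is universal in $G-v_n$, so the universal-vertex case colors $G-v_n$ with at most $n-1$ colors and a similar extension step is required. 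The technical core is proving that the induced star list edge coloring is always feasible; I would approach this via a Hall-type matching argument using the constraints at $v_j$ (which, being universal or near-universal in $G-v_n$, fixes much of the palette structure), supplemented by Kempe-chain swaps to free a color at pinch-point edges if needed.
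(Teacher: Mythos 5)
Your skeleton coincides with the paper's: induction along the mock threshold ordering, a four-way split on the degree $d\in\{0,1,n-2,n-1\}$ of the last vertex, direct extension in the low-degree cases, and embedding into $K_n$ (colored as in the Note) when the last vertex is universal. Up to that point the proposal is sound and essentially identical to the printed argument.

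The genuine gap is exactly the subcase you flag as ``the main obstacle'': $d=n-2$, $\Delta(G)=n-2$, $n$ even. The paper does not attempt a hands-on extension here; it observes that $G$ is then a graph of even order $2p$ with maximum degree $2p-2$ and invokes the known theorem that all such graphs satisfy TCC, citing Hilton and Chen--Fu (references [hil90, chen92] in the text). Your proposed substitute does not close: after coloring $G-v_n$ with $n-1$ colors and adding one fresh color, a leaf $v_i$ with $d_{G-v_n}(v_i)=n-3$ has $n-2$ of the $n$ colors blocked (its incident edge colors plus its own vertex color), so its list for the edge $v_nv_i$ has size $2$, not ``at least three'' as claimed. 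You then need a system of distinct representatives for $n-2$ lists of size about $2$ drawn from $n$ colors, plus a compatible vertex color for $v_n$; Hall's condition can fail for such list systems, and the needed Kempe-chain repairs in a \emph{total} coloring must also respect vertex colors, which is precisely the difficulty that makes the order-$2p$, degree-$(2p-2)$ classification a nontrivial theorem in its own right. As written, the technical core you defer is the whole content of the case, so the induction does not close without either importing the Hilton/Chen--Fu result (as the paper does) or supplying a genuinely new argument for it.
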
 \begin{proof} Consider the
            Mock threshold graph $G$ with vertex ordering  $v_1, v_2, ..., v_i,..., v_n$.

            We prove this theorem using mathematical induction on the induced subgraph $G[v_1,v_2,...,v_k]$.

            For $k\leq 4$, the maximum degree of all the induced subgraphs is less than or equal to 3. We know that a graph with
            maximum degree less than or equal to 3 satisfies TCC ~\cite{kos96}.\vspace{0.3cm}

            Let us assume that $G[v_1,v_2,...,v_k], k\geq 5$  satisfies TCC.\vspace{0.3cm}

            \noindent \textbf{Claim:} The graph $G[v_1,v_2,...,v_k,v_{k+1}]$ satisfies TCC.\vspace{0.3cm}

            The degree of the vertex $v_{k+1}$ in $G[v_1,v_2,...,v_{k+1}]$  can be $0, 1,k-1$ or $k$.

            \noindent Case-1: Suppose  $d(v_{k+1})=0$.

            In this case the vertex is $v_{k+1}$ is an isolated vertex.  By the induction assumption, $G[v_1,v_2,...,v_k,v_{k+1}]$
            satisfies TCC.

            \noindent Case-2: Suppose $d(v_{k+1})=1$.

            In this case, the vertex $v_{k+1}$ is adjacent to a vertex, say $v_i$, in $G[v_1, v_2, ..., v_k]$. Since
            $G[v_1,v_2,...,v_k]$ is total colorable graph with at most $\Delta(G[v_1,v_2,...,v_k])+2$ colors, at each vertex there
            will be at least one missing color.  We assign this missing color to the edge $(v_i,v_{k+1})$, and  for the vertex
            $v_{k+1}$, we assign a color of a vertex which is not adjacent to $v_{k+1}$ and not the color of $v_i$.  Therefore,
            $G[v_1, v_2, ..., v_{k+1}]$ satisfies TCC.

            \noindent Case-3: Suppose  $d(v_{k+1})=k-1$.

            Let us  assume that the vertex $v_{k+1}$ is not adjacent with $v_i$ and  also assume that \\ $\Delta(G[v_1, v_2, ...,
            v_{k+1}])=k-1$. We consider following two cases:

            \noindent Subcase-1:  $k$ is even.

            Since $k$ is even, $k+1$ is odd. Construct a complete graph induced by the vertices
            $v_1,v_2,...v_{i-1},v_{i+1},...,v_{k+1}$.  Now, color this complete graph using  colors in the set $\{0,1,..., n+1\}$
            as given in  the note.  In this coloring assignment there will be one missing color at each of the vertices and they
            are distinct. Now, color the edges  $(v_i,v_j), i\neq j, j=1,2,...,v_{k+1}$,  with the missing colors. Assign the
            color $n-1$ to the vertex $v_i$. To get a total coloring of $G[v_1,v_2,...,v_{k},v_{k+1}]$, we remove these edges and
            there is no change in the maximum degree.

            \noindent Subcase-2:  $k$ is odd.

            In this case $k+1$ is even, say $2p$. It is known that  a graph of order $2p$ with maximum degree $2p-2$ satisfies TCC
            (see ~\cite{hil90, chen92}).

            \noindent Case-4: Suppose $d(v_{k+1})=k$.

            The  maximum degree of  $G[v_1,v_2,...,v_k,v_{k+1}]$ is   $k$. Construct a complete graph  on the vertex set $\{v_1,
            v_2, ..., v_k,v_{k+1}\}$. We know that the complete graph satisfies TCC. After removing the added edges we get a
            total coloring of $G[v_1,v_2,...,v_k,v_{k+1}]$

            Hence, in all the cases, the mock threshold graph satisfies TCC.

    \end{proof}


    \subsection{Multipartite Graphs}   
    
    Graph amalgamation~\cite{100} is one of the powerful techniques for various
    graph problems. A graph $H$ is an amalgamation of a graph $G$ if there exists a function $\phi$ called an amalgamation
    function from $V(G)$ onto $V(H)$ and a bijection $\phi': E(G)\rightarrow E(H)$ such that $e$ joining $u$ and $v$ is in
    $E(G)$ if and only if $\phi'(e)$ joining $\phi(u)$ and $\phi(v)$ is in $E(H)$.  Total coloring conjecture was verified
    for some classes of multipartite graphs using the amalgamation technique.

    Dong and Yap~\cite{83} proved that the complete $p$-prartite graph $K=K(r_1,r_2,..., r_p)$ is of type-I if $r_2\leq r_3-2$
    and $|V(K)|=2n$, $r_1\leq r_2\leq ...\leq r_p$. Deficiency of a graph $G$ to be def$(G)= \sum_{v\in V(G)} (\Delta(G) - d(v))$. Dalal and  Rodger~\cite{82}  proved that $K = K(r_1, . . . , r_5)$ is
    type-II if and only if $|V(K)|=0$(mod 2) and def($K$) is less than the number of parts in $K$ of odd size. Dalal et
    al.~\cite{56} proved that the complete $p$-partite graph $K=K(r_1,r_2,..., r_p)$ is type-I if and only if $K\neq
    K_{r,r}$ and if $K$ has an even number of vertices then def($K$) is at least the number of parts of odd size. Using  graph amalgamations technique they showed that all complete multipartite graphs of the form $K(r,r,...,r+1)$ are type-I .

    Chen et al.~\cite{89} proved that an $(n-2)$-regular equi-bipartite graph $K_{n,n}-E(J)$ is type-I if and only if $J$
    contains a 4-cycle. Campos and de Mello~\cite{31} determined the total chromatic number of some bipartite graphs like
    grids, near-ladders and $k$-dimensional cubes.\\


    \subsection{Cubic Graphs} 
    
    In~\cite{23}  Dantas et al. proved that for each integer $k\geq 2,$ there exists an integer
    $N(k)$ such that, for any $n\geq N(k)$, the generalized Petersen graph $G(n,k)$ has total chromatic number 4.

    Snarks are cyclically 4-edge-connected cubic graphs that do not allow a 3-edge-coloring.  Cavicchiolic et
    al.~\cite{101} proved that all the snarks of order less than 30 are of type-I (this was proved with the aid of a
    computer).  Also they proposed a open problem ``find (if any) the smallest snark (with respect to the order) which is of
    type-II". Motivated by that question, Campos et al.~\cite{71} proved that all graphs in three infinite families of
    snarks, the Flower Snarks, the Goldberg Snarks, and the Twisted Goldberg Snarks are type-I. They gave recursive
    procedures to construct total-colourings that uses 4 colours in each case. Also they proposed an open problem that all
    snarks are type-I. Sasaki el al.~\cite{102} prove that the total chromatic number of some classes of Snarks like,
    Loupekhine, Goldberg snarksboth and Blanuša’s families of graphs is 4. They observed that the total chromatic number
    seems to have no relation with the chromatic index for a cubic graph of cyclic-edge-connectivity less than 4. Also they
    proposed the following questions: \\ (i). What is the smallest type-II cubic graph without a square? \\ (ii). What is
    the smallest type-II snark?\\ Gunnar Brinkmann et al.~\cite{75} considered the problems posed by Campos et al.~\cite{71}
    and Sasaki el al.~\cite{102} and showed that there exists type-II snarks for each even order $n\geq 40.$ They gave
    a computer search for which all the cubic graphs with girth 5 and up to 32 vertices are type-I. Also they proposed the
    following questions:\\ (i). Does there exist a type-II snark of order less than 40? (The only possible orders for which
    the existence is not yet known are 36 and 38.)\\ (ii). What is the smallest type-II cubic graph with girth at least 5?\\
    (iii). Is there a girth $g$ so that all cubic graphs with girth at least $g$ are type-I?\\


    \subsection{Graphs with Degree Constraints} 
    Being a very difficult conjecture,     it makes sense to prove the conjecture either for known classes of graphs or graphs with some degree constrains.
    Hilton and Hind~\cite{25} showed that TCC holds for the graphs $G$ having $\Delta(G)\geq \frac{3}{4}|V(G)|$. Chetwynd et
    al.~\cite{43} gave a necessary and sufficient condition for $\chi''(G)=\Delta(G)+1$,  if $G$ is odd order and regular of
    degree $d\geq \frac{1}{3}\sqrt7 |V(G)|$. \\

    Deficiency of a graph $G$ to be def$(G)= \sum_{v\in V(G)} (\Delta(G) - d(v))$.   A graph $G$ is said to be conformable if
    $G$ has a vertex colouring that uses $\Delta(G) + 1$ colours with def$(G) > n$,  where $n$ is the number of colour
    classes with parity different from $|V(G)|$.  Chew~\cite{30} improved the previous result (Chetwynd et al.~\cite{43})
    for $d\geq [\frac{(\sqrt{37}-1)}{6}]|V(G)|$. He proved that for  any  regular graph $G$ of odd order and with $d\geq
    [\frac{(\sqrt{37}-1)}{6}]|V(G)|$,  $G$ is type-I if and only if $G$ is conformable; otherwise type-II.\\

    Dezheng Xie and Zhongshi He~\cite{26} showed that if $G$ is a regular graph of even order and $\delta(G)\geq
    \frac{2}{3}|V(G)|+ \frac{23}{6}$, then $\chi''(G)\leq \Delta(G)+2$.  Later, Xie DeZheng and Yang WanNian~\cite{55}
    proved the same result for regular graph of odd order. Combining these two results, we conclude that if $G$ regular
    graph with $\delta(G)\geq \frac{2}{3}|V(G)|+ \frac{23}{6}$ then $G$ satisfies TCC.  In~\cite{32} Machado and de
    Figueiredo proved that every non-complete \{square, unichord\}-free graph of maximum degree at least 4 is type-I.  Also
    they proved that any \{square, unichord\}-free graph is total colorable. Using graph decompositions, the same
    authors~\cite{33} proved that the non-complete \{ square, unichord \}-free graphs of maximum degree 3 are type-I.

    A graph is said to be $s$-degenerate for an integer $s\geq 1$ if it can be reduced to a trivial graph by successive
    removal of vertices with degree $ \leq s$. For example, every planar graph is 5-degenerate. Shuji Isobe et
    al.~\cite{37} proved that an $s$-degenerate graph $G$ has admits a total coloring with $\Delta(G)+1$ colors if the maximum degree
    $\Delta(G)\geq 4s+3$. The proof is based on Vizing’s and and Konig’s theorems on edge colorings. Further, they gave a linear
    time algorithm to find a total coloring of a graph $G$ with minimum number of colors if $G$ is a partial $k$-tree.


    \subsection{Other Classes of  Graphs}

    Mycielski ~\cite{42}, introduced the graph Mycielskian graph $\mu(G)$, to build a graph with a high chromatic number and
    a small clique number.  Let $G$ be a graph with vertex set $V^0=\{v_1^0,v_2^0,...,v_n^0\}$ and edge set $E^0$. Given an
    integer $m\geq 1$, the $m$-Mycielskian of $G$, denoted by $\mu_m(G)$,  is the graph with vertex set $V^0\cup V^1\cup
    ...\cup V^m \cup\{u\}$, where $V^i=\{v_j^i:v_j^0\in V^0\}$ is the $i^{th}$ distinct copy of $V^0$ for $i=1,2,...,m,$ and
    the edge set $E^0 \cup(\bigcup_{i=0}^{m-1}\{v_j^iv_j^{i+1}:v_j^0v_j'^0 \in E^0\})\cup \{v_j^m u:v_j^m \in V^m\}$. Chen
    et al.~\cite{29} showed that the generalized Mycielski graphs satisfy TCC. Also they proved  the total chromatic
    number of generalized Mycielski graphs $\mu_m(G)$ is $\Delta(\mu_m(G))+1$ if $\Delta(G)\leq \frac{|V(G)|-1}{2}$.

    Zhi-wen Wang et al.~\cite{38} proved that the vertex distinguishing total chromatic number and the total chromatic
    number  are same for the graphs $P_n\vee P_n$ and $C_n\vee C_n$. Li and Zhang~\cite{85} proved that the join of a
    complete inequibipartite graph $K_{n_1,n_2}$ and a path $P_m$ is type-I.  Hilton et al.~\cite{84}, determined the total
    chromatic numbers of graphs of the form $G_1+G_2$, where $G_1$ and $G_2$ are graphs of maximum degree at most two.\\
    The line graph of $G$, denoted by $L(G)$, has the set $E(G)$ as its
    vertex set and two distinct vertices $e_1, e_2 \in V (L(G))$ are
    adjacent if and only if they share a common vertex in $G$. Vignesh et al. ~\cite{VGS18} showed in a direct manner that for $n\leq 4$, $L(K_n)$ is type-I. They believe that $L(K_n)$ is always type-I. Hence they proposed the following conjecture:
    \begin{con}
        For any complete graph $K_n$, $\chi''(L(K_n))= 2n-3$.
    \end{con}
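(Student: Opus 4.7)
Since $L(K_n)$ is $(2n-4)$-regular, the lower bound $\chi''(L(K_n)) \geq 2n-3$ is immediate; the task is to show that $L(K_n)$ is type-I. The plan is to reformulate the goal as constructing a proper edge coloring of $L(K_n)$ with $2n-3$ colors such that at every vertex $v$ the unique color absent from the $2n-4$ incident edges (the ``missing color'' at $v$) yields a proper vertex coloring when assigned to $v$. I would identify $V(L(K_n))$ with the $2$-subsets of $[n]$ and the edges of $L(K_n)$ with cherries $(a;b,c)$---pairs of edges $\{a,b\},\{a,c\}$ of $K_n$ sharing endpoint $a$---so that an edge coloring of $L(K_n)$ becomes a coloring of cherries under the constraint that two cherries sharing an edge of $K_n$ receive distinct colors.

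The construction would leverage two natural decompositions of $L(K_n)$: the partition $E(L(K_n)) = \bigsqcup_{v \in V(K_n)} E(K_{n-1}^{(v)})$ into $n$ edge-disjoint $(n-1)$-cliques (one per vertex of $K_n$), and a round-robin edge coloring of $K_n$ with $\chi'(K_n) \in \{n-1,n\}$ colors whose color classes form independent sets in $L(K_n)$. First I would fix a canonical round-robin schedule of $K_n$, use it to prescribe the vertex coloring of $L(K_n)$, and then color each cherry $(a;b,c)$ by a symmetric modular formula in $a,b,c$ (for example a rule based on $a+b+c \bmod (2n-3)$), designed so that (i) two cherries sharing an edge of $K_n$ receive distinct colors and (ii) no cherry color coincides with the vertex color of either of its endpoints.

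A case split on $n \bmod 4$ is essentially forced: $L(K_n)$ is overfull, and hence $\chi'(L(K_n)) = 2n-3$, exactly when $|V(L(K_n))| = n(n-1)/2$ is odd, which occurs for $n \equiv 2,3 \pmod 4$. When $n \equiv 0,1 \pmod 4$ there are two free colors at each vertex, giving slack to arrange the vertex coloring essentially independently of the edge coloring. When $n \equiv 2,3 \pmod 4$, by contrast, the edge coloring must use all $2n-3$ colors tightly and the vertex color is forced at each vertex to be the unique missing color; the delicate content is then to show that this forced assignment is a proper vertex coloring of $L(K_n)$---a conformability statement with no slack.

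The main obstacle is precisely this overfull regime, where Chetwynd--Hilton style conformability results for regular graphs with degree close to $|V|/2$ do not apply cleanly at $\Delta = 2n-4$ on $\binom{n}{2}$ vertices, so the missing-color function must be designed by hand. I anticipate resolving it by either an explicit Latin-square or Steiner-type construction that forces the missing colors to inherit a proper vertex coloring, or a recursive amalgamation argument that builds a total coloring of $L(K_n)$ from one of $L(K_{n-1})$ together with the added clique $K_{n-1}^{(n)}$, with a careful local correction around the vertex carrying the ``defect'' of the near-$1$-factorization of $K_n$. The base cases $n \leq 4$ are dispatched in Vignesh et al.~\cite{VGS18}.
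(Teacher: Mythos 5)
This statement is not a theorem in the paper: it is Conjecture~3, posed by Vignesh et al.\ and left open. The paper offers no proof beyond the remark that $L(K_n)$ has been checked to be type-I for $n\leq 4$, so there is no ``paper's own proof'' to compare against, and any correct argument you supplied would be new mathematics rather than a reconstruction.

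Your text, however, is a plan rather than a proof, and the entire mathematical content is deferred. The setup is fine (the identification of $E(L(K_n))$ with cherries, the $(2n-4)$-regularity, the decomposition into $n$ edge-disjoint copies of $K_{n-1}$, and the observation that $\binom{n}{2}$ is odd exactly when $n\equiv 2,3 \pmod 4$ are all correct), but the crucial object --- the explicit cherry-coloring rule and the verification that the induced missing-color assignment is a proper vertex coloring of $L(K_n)$ --- is never produced; you only ``anticipate resolving it'' by one of two unspecified constructions. That is precisely the open part of the conjecture. There is also a concrete error in the case analysis: since $L(K_n)$ is $(2n-4)$-regular, any total coloring with $2n-3$ colors makes all $2n-3$ colors appear exactly once among the $2n-3$ elements incident with or equal to a given vertex, so the vertex color is \emph{always} forced to be the unique color missing from its incident edges. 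The claimed ``two free colors at each vertex'' for $n\equiv 0,1\pmod 4$ does not exist, so the conformability difficulty you isolate for the overfull case is in fact present for every $n$, and the purported easier regime does not materialize. Finally, the suggestion that Class-1-ness of $L(K_n)$ in the non-overfull case gives slack is unsupported: the high-degree $1$-factorization results you allude to require degree on the order of $|V|/2=n(n-1)/4$, far above $2n-4$. In short, the proposal correctly frames the problem but proves nothing beyond what the paper already records.
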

     The \textit{double graph} $D(G)$ of a given graph $G$ is constructed by making two copies of $G$ (including the initial edge set of each) and adding edges $((u,1),(v,2))$ and $((v,1),(u,2))$ for every edge $uv$ of $G$. Vignesh et al. ~\cite{VGS18} also prove that for any total colorable graph $G$,\\
    $\chi''(D(G)) \begin{cases}=\Delta(D(G))+1 & \text{if $G$ is type I}\\ \leq  \Delta(D(G))+2 & \text{if $G$ is type II}. \end{cases} $

    We know that middle graphs are subclasses of total graphs and super classes of line graphs. Muthuramakrishnan and Jayaraman ~\cite{MJ17}  obtained the total chromatic number for line, middle and total graphs of star and bistars.  \\
		
        The Kneser graph $K(n,k)$ is the graph whose vertices correspond to the $k$-element subsets of a set of $n$ elements,
        and where two vertices are adjacent if and only if the two corresponding sets are disjoint. A vertex in the odd graph
        $O_n$  is a $(n - 1)$-element subset of a $(2n -1)$-element set. Two vertices are connected by an edge if and only if
        the corresponding subsets are disjoint. Note that the odd graphs are particular case of Kneser graphs.

        \begin{thm} Odd graph $O_n$ satisfies TCC.
        \end{thm}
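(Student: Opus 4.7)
The plan is to prove $\chi''(O_n) \leq n+2$ by handling small $n$ via classical bounds and larger $n$ via a combined edge-and-vertex coloring argument.

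For $n \leq 5$, the maximum degree $\Delta(O_n) = n$ is at most $5$, so Kostochka's result~\cite{kos96} that every graph with $\Delta \leq 5$ satisfies TCC dispatches these cases. In particular, $O_2 = K_3$ gives $\chi''=3$ (type-I), and $O_3$ is the Petersen graph with $\chi''=5 = \Delta+2$ (type-II).

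For $n \geq 6$, I would exploit three structural properties of $O_n$: (i) $n$-regularity; (ii) the Lov\'asz--Kneser bound $\chi(O_n)=3$; and (iii) the canonical edge decomposition $E(O_n) = \bigsqcup_{i \in [2n-1]} E_i$, where $E_i = \{\{A,B\} : A \cup B = [2n-1] \setminus \{i\}\}$ is a matching, and each vertex $v$ meets $E_i$ in exactly one edge for precisely the $n$ indices $i \notin v$. First I would invoke Vizing's theorem to obtain a proper edge coloring $\phi_E : E(O_n) \to \{1,\ldots,n+1\}$ (the decomposition $(E_i)$ can be used as a starting point). Because $O_n$ is $n$-regular, each vertex $v$ has a unique missing color $m(v) \in \{1,\ldots,n+1\}$. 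I would then reserve the extra color $n+2$ for one Lov\'asz--Kneser color class $I \subseteq V(O_n)$ and assign every $v \notin I$ the total color $m(v)$.

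The main obstacle is ensuring that this vertex assignment is proper: that no edge $uv$ with $u, v \notin I$ satisfies $m(u) = m(v)$. Controlling the missing-color pattern on $O_n - I$ is the crux of the argument; this would require choosing $\phi_E$ carefully, likely by starting from an arbitrary Vizing coloring and applying successive alternating-path (Vizing-fan) recolorings to eliminate conflicts among vertices outside $I$, while leaving the colors incident to $I$ untouched. The vertex-transitive action of $S_{2n-1}$ on $O_n$, combined with the structured decomposition $E(O_n) = \bigsqcup_i E_i$, should make these local adjustments feasible and guarantee termination. Should the direct conflict-elimination argument prove too delicate, a fallback would be to split into the two parity cases of $|V(O_n)| = \binom{2n-1}{n-1}$ and apply Chetwynd--Hilton-style conformability criteria for regular graphs.
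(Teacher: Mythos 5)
Your reduction of the theorem to the single claim ``no edge $uv$ with $u,v\notin I$ has $m(u)=m(v)$'' is where the proof stops being a proof. You name this as the crux and then offer only a programme for attacking it (Vizing-fan recolorings that ``should'' work, or a fallback to conformability criteria), with no argument that the recoloring process terminates, no invariant it preserves, and no reason the $S_{2n-1}$-symmetry helps---symmetry of the graph says nothing about the missing-color pattern of a particular Vizing coloring. Since $O_n$ is $n$-regular and $(n+1)$-edge-colorings of regular graphs routinely leave adjacent vertices missing the same color, the conflict you need to exclude is generic, not exceptional. As written, the step that actually proves the theorem is absent. (A small factual slip on the side: $O_3$ is the Petersen graph, which is type-I with $\chi''=4$, not type-II; this does not affect your $n\le 5$ cases, which only need TCC.)

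The paper avoids your obstacle entirely by choosing the independent set and the spare color differently. It takes $I$ to be a \emph{star} independent set (all $(n-1)$-sets containing a fixed element $x$), so that $V(O_n)\setminus I$ induces a perfect matching: each vertex avoiding $x$ has a unique disjoint partner also avoiding $x$. The bipartite graph between $I$ and its complement has maximum degree $n$, hence is $n$-edge-colorable by K\H{o}nig; the matching edges and all of $I$ receive color $n+1$; then one endpoint of each matching edge receives color $n+2$ and the other endpoint receives the single color of $\{1,\dots,n\}$ missing at it. Because one endpoint of every matching edge is colored $n+2$, the question of whether two adjacent non-$I$ vertices share a missing color never arises---which is exactly the question your version cannot answer. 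If you want to salvage your outline, replace the Lov\'asz--Kneser color class by this star decomposition and spend color $n+2$ on half of the matching rather than on $I$; at that point you have essentially reconstructed the paper's argument.
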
 \begin{proof}

            Consider a $2n-1$ element set $X$. Let $O_n$ be an odd graph defined from the subsets of $X$. Let $x$ be any element of
            $X$. Then, among the vertices of $O_n$, exactly $ \binom {2n-2} {n-2}$ vertices correspond to sets that contain $x$.
            Because all these sets contain $x$, they are not disjoint, and form an independent set of $O_n$. That is, $O_n$ has
            $2n - 1$ different independent sets of size $ \binom {2n-2} {n-2}$.  Further, every maximum independent set must
            have this form, so, $O_n$ has exactly $2n - 1$ maximum independent sets.

            If $I$ is a maximum independent set, formed by the sets that contain $x$, then the complement of $I$ is the set of
            vertices that do not contain $x$. This complementary set induces a matching in $G$. Each vertex of the independent
            set is adjacent to $n$ vertices of the matching, and each vertex of the matching is adjacent to $n - 1$ vertices of
            the independent set ~\cite{god80}.

            Based on the decomposition, we give a total coloring of $O_n$ in the following way:

            Assign $n$ colors to the edges between the vertices in the maximum independent set $I$ and the vertices in the matching.
            Color the edges in matching and the vertices in $I$  with a new color. Color one set of vertices in the matching
            with another new color and the second set of vertices with the missing colors at these vertices.  This will give a
            total coloring of $O_n$ using at most $n+2=\Delta(O_n)+2$ colors.

        \end{proof}

        In the next section we look at the algorithmic aspects of TCC that has been discussed in the literature.

        \section{Algorithms}

        It is known~\cite{81} that the problem of finding a minimal total coloring of a graph is in general case NP-hard.  In
        the same paper, Sanchez-Arroyo also proved that the problem is NP-complete even for cubic bipartite graphs. For general
        classes of graphs, the total coloring would be harder than edge colouring. Due to its complexity, several authors aim
        to find classes of graphs where there is a  polynomial time algorithm for optimal total coloring.  Bojarshinov~\cite{21}
        showed that the Behzad and Vizing conjecture holds for interval graphs. Also, he proved that every interval graph with
        even maximum degree can be totally colored in  $\Delta(G)+1$ colors in time $O(|V(G)|+|E(G)|+(\Delta(G))^2)$. This is
        the first known polynomial time algorithm for total colorings. Recently, Golumbic~\cite{53} showed that a
        rooted path graph $G$ is type-I if $\Delta(G)$ is even, otherwise it satisfies TCC. Also, he gave a  greedy algorithm
        (very greedy neighborhood coloring algorithm) which takes $O(|V(G)|+|E(G)|)$ time. \\

        Chordal graphs are a subclass of the perfect graphs. We know that linear time algorithms  exist for vertex colorings of
        chordal graphs. Yet, the complexity of total coloring is open for the class of chordal graphs. The complexity is
        know for interval
        graphs~\cite{21}, split graphs~\cite{27} and dually chordal graphs~\cite{41}. In~\cite{54}, Machado and  Celina de
        Figueiredo proved that the total coloring of bipartite unichord-free  graphs is NP-complete using the concept of separating
        class.  Machado et al. ~\cite{34} used a decomposition result to establish that every chordless graph of maximum degree
        $\Delta(G)\geq 3$ has total chromatic number $\Delta(G)+1$ and proved that this coloring can be obtained in time
        $O(|V(G)|^3|E(G)|)$. Machado et al. ~\cite{52}  discussed the time complexity of \{square, unichord\}-free graphs and
        showed that the total coloring can be obtained in polynomial time. In this case it is interesting to see that the edge coloring
        of this type of graphs is NP-complete. \\

        \begin{table}[h]
            \centering
            
            \begin{tabular} {| l | l| l |}
                \hline
                \textbf{Class of graphs }          & \textbf{Edge coloring}       & \textbf{Total coloring }           \\ \hline
                \hline
                Unichord free           & NP-complete~\cite{87}       & NP-complete~\cite{32}             \\ \hline

                Chordlesss              & Polynomial~\cite{34}        &       Polynomial~\cite{34}         \\ \hline

                \{Square, unichord \}-free, $\Delta \geq 4$ &   Polynomial~\cite{87}   &  Polynomial~\cite{33}        \\ \hline

                \{Square, unichord \}-free, $\Delta = 3$ &   NP-complete~\cite{87}   &  Polynomial~\cite{52}        \\ \hline

                Bipartite unichord free &   NP-complete~\cite{50}   &  NP-complete~\cite{54}       \\ \hline

                Interval graphs & Polynomial~\cite{21} & Polynomial~\cite{21} \\ \hline

                Some classes of circulant graphs & Polynomial~\cite{88} & Polynomial~\cite{18,39,61} \\ \hline

            \end{tabular}
            
            \caption{Computational complexity of edge and total colorings}

            \label{ccec}
        \end{table}
        Shuji Isobe et al. ~\cite{37} proved that the total coloring problem for $s$-degenerate graph can be solved in
        time O($n$log$n$) for a fairly large class of graphs including all planar graphs with sufficiently large maximum
        degree. Further, they showed that the total coloring problem can be solved in linear time for partial $k$-trees with bounded $k$.

        Dantas et al. ~\cite{73} proved that the problem of deciding whether the equitable total chromatic number is 4 is
        NP-complete for bipartite cubic graphs. They also found one family of type-I  cubic graphs of girth 5 having equitable
        total chromatic number 4. There are several classes of graphs in which the complexity of total coloring are unknown.
        We conclude this survey with a listing of the computational complexity of edge and total colorings of certain classes of graphs in Table~\ref{ccec}.


        \bibliographystyle{plain}
        \bibliography{rev}

    \end{document}